\newcommand{\p}{\partial}
\newcommand{\Z}{\mathbb Z}
\newcommand{\C}{\mathbb C}
\newcommand{\M}{\mathcal M}
\newcommand{\R}{\mathbb R}
\renewcommand{\phi}{\varphi}
\newcommand{\SW}{\operatorname{SW}\,}
\newcommand{\ind}{\operatorname{ind}}
\newcommand{\spin}{\,\operatorname{spin}}
\newcommand{\sign}{\operatorname{sign}}
\newcommand{\Dir}{\,\operatorname{Dir}}
\newcommand{\Sign}{\,\operatorname{Sign}}
\renewcommand{\(}{((}
\renewcommand{\)}{))}
\newcommand{\lt}{\left(\hspace{-0.06in}\left(}
\newcommand{\rt}{\right)\hspace{-0.06in}\right)}
\newcommand{\mmod}{\hspace{-0.07in}\mod}
\newtheorem{theorem}{Theorem}[section]
\newtheorem{lemma}[theorem]{Lemma}
\theoremstyle{definition}
\title{The {\Large $\bar\mu$}--invariant of Seifert fibered homology spheres 
and the Dirac operator}
\thanks{The first author was partially supported by NSF Grant 0804760. 
The second author was partially supported by the Max-Planck-Institut 
f\"ur Mathematik in Bonn, Germany}
\author[Daniel Ruberman]{Daniel Ruberman}
\address{Department of Mathematics, MS 050\newline\indent Brandeis
University \newline\indent Waltham, MA 02454}
\email{\rm{ruberman@brandeis.edu}}
\author[Nikolai Saveliev]{Nikolai Saveliev}
\address{Department of Mathematics\newline\indent
University of Miami \newline\indent PO Box 249085
\newline\indent Coral Gables, FL 33124}
\email{\rm{saveliev@math.miami.edu}}
\begin{document}
\begin{abstract}
We derive a formula for the $\bar\mu$--invariant of a Seifert fibered 
homology sphere in terms of the $\eta$--invariant of its Dirac operator.  
As a consequence, we obtain a vanishing result for the index of 
certain Dirac operators on plumbed 4-manifolds bounding such spheres.
\end{abstract}

\maketitle
\section{Introduction}\label{S:intro}
The $\bar\mu$--invariant is an integral lift of the Rohlin invariant for 
plumbed homology 3-spheres defined by Neumann \cite{N} and Siebenmann 
\cite{Sb}. It has played an important role in the study of homology 
cobordisms of such homology spheres. Fukumoto and Furuta \cite{FuFu} and 
Saveliev \cite{Sav2} showed that the $\bar\mu$--invariant is an obstruction 
for a Seifert fibered homology sphere to have finite order in the integral 
homology cobordism group $\Theta^3_H$; this fact allowed them to make 
progress on the question of the splittability of the Rohlin homomorphism 
$\rho: \Theta^3_H \to \Z_2$. Ue \cite{Ue2} and Stipsicz \cite{St} studied 
the behavior of $\bar\mu$ with respect to rational homology cobordisms. 

In the process, the $\bar\mu$--invariant has been interpreted in several
different ways\,: as an equivariant Casson invariant in \cite{CS}, as a 
Lefschetz number in instanton Floer homology in \cite{RS} and \cite{Sav1},
and as the correction term in Heegaard Floer theory in \cite{St} and 
\cite{Ue1}.  

More recently, $\bar\mu$ appeared in our paper \cite{MRS} in connection 
with a Seiberg-Witten invariant $\lambda_{\,\SW}$ of a homology $S^1\times 
S^3$. We conjectured that
\begin{equation}\label{E:conj}
\lambda_{\,SW} (X) = - \bar\mu (Y)
\end{equation}
for any Seifert fibered homology sphere $Y = \Sigma(a_1,\ldots,a_n)$ and
the mapping torus $X$ of a natural involution on $Y$ viewed as 
a link of a complex surface singularity. This conjecture will be explained 
in detail and proved in Section \ref{S:conj}. For the purposes of this 
introduction, we will only mention that its proof will rely on the 
following identity.

\begin{theorem}\label{T:main}
Let $Y = \Sigma (a_1,\ldots,a_n)$ be a Seifert fibered homology sphere 
oriented as the link of complex surface singularity and endowed with a 
natural metric realizing the Thurston geometry on $Y$; see \cite{Scott}. 
Then
\begin{equation}\label{E:main}
\frac 1 2\;\eta_{\Dir} (Y) + \frac 1 8\;\eta_{\Sign} (Y) = -\bar\mu (Y),
\end{equation}
where $\eta_{\Dir} (Y)$ and $\eta_{\Sign}(Y)$ are the $\eta$--invariants 
of, respectively, the Dirac operator and the odd signature operator on 
$Y$.
\end{theorem}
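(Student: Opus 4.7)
The plan is to apply the Atiyah--Patodi--Singer index theorem simultaneously to the Dirac operator and to the odd signature operator on a spin$^c$ $4$-manifold $W$ with $\partial W = Y$, and then to identify the resulting topological quantities with $\bar\mu(Y)$ via the plumbing description of Neumann and Siebenmann.

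First, I would take $W$ to be the canonical plumbed $4$-manifold associated to the Seifert invariants of $Y$, namely the good resolution of the complex surface singularity whose link is $Y$. Equip $W$ with any metric that is cylindrical near $\partial W = Y$ and whose boundary is isometric to $Y$ with its Thurston metric, and choose a spin$^c$ structure $\mathfrak s$ on $W$ restricting to the unique spin structure on $Y$. The APS formulae for $D_W^{\mathfrak s}$ and for the signature operator then read
\begin{align*}
\ind D_W^{\mathfrak s} &= \int_W \hat A(TW)\, e^{c_1(\mathfrak s)/2} - \tfrac{1}{2}\bigl(\eta_{\Dir}(Y) + h\bigr),\\
\sign(W) &= \int_W L(TW) - \eta_{\Sign}(Y),
\end{align*}
where $h=\dim\ker D_Y$. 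The $\eta$-invariants depend only on the boundary metric, so the particular interior extension is irrelevant.

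Next, using the $4$-dimensional identity $\hat A_{(4)} = -L_{(4)}/8$, I would eliminate the interior Pontryagin integrals and obtain
\[
\tfrac{1}{2}\eta_{\Dir}(Y) + \tfrac{1}{8}\eta_{\Sign}(Y) \;=\; -\tfrac{1}{8}\bigl(\sign(W) - c_1(\mathfrak s)^2\bigr) - \ind D_W^{\mathfrak s} - \tfrac{h}{2}.
\]
The combinatorial formula of Neumann and Siebenmann identifies $\tfrac{1}{8}\bigl(\sign(W)-c_1(\mathfrak s)^2\bigr)$ with $\bar\mu(Y)$ for the distinguished spin$^c$ extension of the spin structure on $Y$. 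What remains is to show the two vanishings $\ind D_W^{\mathfrak s}=0$ and $h=0$.

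For $h=0$ the plan is to exploit the $S^1$-action on $Y$ coming from the Seifert fibration: decomposing spinors into isotypic components reduces the harmonic equation to a weighted equation on the base orbifold, which can be ruled out by an explicit spectral calculation on each piece of the Thurston geometry. For the vanishing of the Dirac index on $W$, I would extend the $S^1$-action over the resolution and invoke equivariant localization, reducing $\ind D_W^{\mathfrak s}$ to a sum of local contributions from $S^1$-fixed loci (arising from the multiple fibers and the central vertex of the plumbing) and verifying that these contributions cancel.

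\textbf{Main obstacle.} The hardest point will be the vanishing of $\ind D_W^{\mathfrak s}$ in the generic $\widetilde{SL}_2$ case, where no Lichnerowicz-style positive curvature argument is available; this forces the use of a delicate equivariant localization and a careful count of fixed-point contributions in terms of the Seifert invariants. A parallel difficulty is obtaining the Neumann--Siebenmann identification in the precise form $\bar\mu(Y) = \tfrac{1}{8}(\sign(W) - c_1(\mathfrak s)^2)$, with the spin$^c$ correction included, for arbitrary (non-Brieskorn) Seifert spheres; this is a purely combinatorial lemma but one that must be verified carefully before the index-theoretic identity above can be rewritten as \eqref{E:main}.
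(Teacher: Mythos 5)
Your reduction via the Atiyah--Patodi--Singer theorem is sound as far as it goes, and the bookkeeping is essentially right: with $\mathfrak s$ chosen so that $c_1(\mathfrak s)$ is dual to the integral Wu class of the plumbing, the quantity $\tfrac18(\sign(W)-c_1(\mathfrak s)^2)$ equals $\bar\mu(Y)$ essentially by Neumann's definition (so that part is not a difficulty), and $h=\dim\ker D_Y=0$ is known (Nicolaescu). But you have thereby reduced the theorem to the single claim $\ind D^{\mathfrak s}_W=0$, and this is precisely where the genuine gap lies: that vanishing is not an input one can quote --- it was previously known only for the $E_8$ manifold bounding $\Sigma(2,3,5)$ (Kronheimer, Fr{\o}yshov), and in this paper it appears as Theorem \ref{T:dirac}, \emph{deduced from} the identity \eqref{E:main} by exactly the APS manipulation you wrote down. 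Your proposed route to it --- extend the $S^1$-action over the resolution and ``verify that the fixed-point contributions cancel'' --- is a one-sentence sketch of what would be the entire content of the proof. Localization for an index with APS boundary conditions is delicate (the boundary term reintroduces an equivariant eta invariant of the free $S^1$-action on $Y$, so one does not escape an eta computation), and no cancellation is actually exhibited. As it stands the argument is circular in effect: the hard analytic/combinatorial content has been pushed into a step that is logically equivalent to the theorem plus the statement you are trying to prove.

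The paper avoids this by working entirely on the $3$-manifold: it quotes Nicolaescu's closed formulas expressing $\tfrac12\eta_{\Dir}(Y)+\tfrac18\eta_{\Sign}(Y)$ in terms of Dedekind--Rademacher sums $s(q,p;x,y)$, and Neumann--Raymond's formula expressing $\bar\mu(Y)$ through the cotangent sums $c(q,p)=-4s(q,p)+8s(q,2p)$, and then matches the two expressions by elementary manipulations (splitting sums over even and odd residues, Lemma \ref{L:one}, and the reciprocity law for Dedekind--Rademacher sums), treating the odd and even cases of the $a_i$ separately. If you want to salvage your approach, you would need an independent proof of $\ind D^{\mathfrak s}_W=0$; otherwise you should replace that step with a direct comparison of the known number-theoretic expressions for the two sides.
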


In addition, identity \eqref{E:main} will be used to extend the vanishing 
result of Kronheimer for the index of the chiral Dirac operator on the 
$E_8$ manifold bounding $\Sigma (2,3,5)$; see \cite[Lemma 2.2]{Kron} and 
\cite[Proposition 8]{Fro}. Let $Y$ be a Seifert fibered homology sphere 
as above, and $X$ a plumbed manifold with boundary $Y$ and a Riemannian 
metric which is a product near the boundary. Associated with $X$ is the 
integral Wu class $w \in H_2(X;\Z)$ which will be described in detail in 
Section \ref{S:dirac}.  

\begin{theorem}\label{T:dirac}
Let $D^+_L(X)$ be the $\spin^c$ Dirac operator on $X$ with $c_1(L)$ dual to 
the class $w \in H_2 (X;\Z)$,  and with the Atiyah--Patodi--Singer boundary 
condition. Then $\ind D^+_L (X) = 0$. In particular, if $X$ is spin then 
$w$ vanishes and $\ind D^+ (X) = 0$.
\end{theorem}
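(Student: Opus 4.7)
The plan is to combine three ingredients: the Atiyah--Patodi--Singer index formula for the twisted Dirac operator on $(X,Y)$, the APS signature theorem, and the Neumann--Siebenmann description of $\bar\mu(Y)$ as a signature defect of the plumbed filling. Theorem \ref{T:main} is the bridge that lets us eliminate the $Y$--side spectral data.

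First I would write out APS for $D^+_L$. In dimension four the Chern--Weil integrand $\hat A(X)\,e^{c_1(L)/2}$ has top-degree part $-p_1(X)/24+c_1(L)^2/8$; since $Y$ is an integral homology sphere, $H^2(Y;\Z)=0$ forces $L|_Y$ to be trivial and the boundary $\eta$--invariant reduces to the ordinary Dirac $\eta_{\Dir}(Y)$. I would then feed in the APS signature theorem $\sigma(X)=\tfrac13\int_X p_1(X)-\eta_{\Sign}(Y)$ to trade $\int_X p_1(X)$ for $3\sigma(X)+3\eta_{\Sign}(Y)$. Collecting terms,
\begin{equation*}
\ind D^+_L(X)=\tfrac18\bigl(c_1(L)^2-\sigma(X)\bigr)-\Bigl(\tfrac12\eta_{\Dir}(Y)+\tfrac18\eta_{\Sign}(Y)\Bigr),
\end{equation*}
with the usual APS kernel corrections understood inside the $\eta$--terms.

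Second, Theorem \ref{T:main} immediately replaces the parenthesised combination by $-\bar\mu(Y)$, leaving
\begin{equation*}
\ind D^+_L(X)=\tfrac18\bigl(c_1(L)^2-\sigma(X)\bigr)+\bar\mu(Y).
\end{equation*}
Third, I would invoke the Neumann--Siebenmann formula $\bar\mu(Y)=\tfrac18\bigl(\sigma(X)-w\cdot w\bigr)$, which is the defining characterisation of $\bar\mu$ via the plumbed filling. Because $c_1(L)$ is Poincar\'e--Lefschetz dual to $w$, the cup-square $c_1(L)^2$ paired with $[X,\p X]$ equals the self-intersection $w\cdot w$, so the signature and Wu-class contributions cancel and the right-hand side collapses to zero.

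The main obstacle I anticipate is keeping careful track of the kernel dimensions $h_{\Dir}$ and $h_{\Sign}$ that enter APS, together with the conventions under which Theorem \ref{T:main} is stated: one wants the $h/2$ terms produced by the two applications of APS to line up with those implicit in \eqref{E:main}, so that the final answer is an honest integer equal to zero rather than a half-integer defect. Once $\ind D^+_L(X)=0$ is in hand, the spin corollary is automatic: a spin plumbing has even intersection form, so every class has even self-intersection and the integral Wu class $w$ vanishes; then $L$ is trivial and $D^+_L$ is the ordinary chiral Dirac operator $D^+$, recovering and extending Kronheimer's vanishing for $\Sigma(2,3,5)$ to all Seifert fibered homology spheres.
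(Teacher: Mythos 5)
Your proposal is correct and is essentially the paper's own argument: apply the Atiyah--Patodi--Singer index theorem for the $\spin^c$ Dirac operator (the paper quotes Nicolaescu's formula (1.37) rather than rederiving it from $\hat A\,e^{c_1(L)/2}$ and the signature theorem, but the result is the same identity $\ind D^+_L(X)=\tfrac18\bigl(c_1(L)^2-\sign(X)\bigr)-\bigl(\tfrac12\eta_{\Dir}(Y)+\tfrac18\eta_{\Sign}(Y)\bigr)$), then substitute Theorem \ref{T:main} and the definition $\bar\mu(Y)=\tfrac18(\sign(X)-w\cdot w)$ with $c_1(L)^2=w\cdot w$. The kernel issue you flag is resolved exactly as you hope: the Dirac operator on $Y$ in the Thurston-geometry metric has trivial kernel by Nicolaescu's computation, so no $h/2$ correction appears.
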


The next four sections of the paper will be devoted to the proof of Theorem 
\ref{T:main}. We will proceed by expressing both sides of (\ref{E:main}) in 
terms of Dedekind--Rademacher sums and by comparing the latter expressions 
using the reciprocity law and some elementary calculations. Theorem 
\ref{T:dirac} will be proved in Section \ref{S:dirac}, and Conjecture 
\eqref{E:conj} in Section \ref{S:conj}.  Our notation and conventions for 
Seifert fibered homology spheres will follow~\cite{saveliev:spheres}.


\section{The $\eta$--invariants}
Let $p > 0$ and $q > 0$ be pairwise relatively prime integers, and $x$ and
$y$ arbitrary real numbers.  The Dedekind--Rademacher sums were defined in 
\cite{Rad} by the formula 
\[
s(q,p;x,y) = \sum_{\mu \mmod p} \lt \frac {\mu + y} p \rt
 \lt \frac {q(\mu + y)} p + x \rt
\]
where, for any real number $r$, we set $\{r\} = r - [r]$ and 
\[
\( r \) = 
\begin{cases}
\; 0, &\quad\text{if\; $r \in \Z$}, \\
\, \{r\} - 1/2, &\quad\text{if\; $r \notin \Z$}.
\end{cases}
\]
It is clear that $s(q,p;x,y)$ only depends on $x$, $y\mmod 1$. When both $x$
and $y$ are integers, we get back the usual Dedekind sums

\begin{equation}\label{E:ded}
s(q,p) = \sum_{\mu \mmod p} \lt \frac {\mu} p \rt \lt \frac {q \mu} p \rt.
\end{equation}

\bigskip

The left-hand side of (\ref{E:main}) was expressed by Nicolaescu \cite{Nic2} 
in terms of Dedekind--Rademacher sums. Note that since the $a_i$ are coprime, 
at most one of them is even; if that occurs then we will choose the even one 
to be $a_1$.\\[2ex]
\textbf{Odd case:} if all $a_1,\ldots,a_n$ are odd then, according to the
formula (1.9) of \cite{Nic2}, we have
\begin{multline}\label{E:eta-odd}
\frac 1 2\;\eta_{\Dir} (Y) + \frac 1 8\;\eta_{\Sign} (Y) = 
- \frac 1 {8\,a_1\cdots a_n} + \\ + \frac 1 8 + 
\frac 1 2\,\sum_{i=1}^n s(a_1\cdots a_n/a_i, a_i) + 
\sum_{i=1}^n s(a_1\ldots a_n/a_i,a_i; 1/2,1/2).
\end{multline}

\bigskip

\noindent
\textbf{Even case:} if $a_1$ is even then, according to the formula (1.6) 
of \cite{Nic2},
\begin{multline}\label{E:eta-even}
\frac 1 2\;\eta_{\Dir} (Y) + \frac 1 8\;\eta_{\Sign} (Y) = \\ = \frac 1 8 + 
\frac 1 2\,\sum_{i=1}^n s(a_1\cdots a_n/a_i, a_i) + 
\sum_{i=1}^n s(a_1\ldots a_n/a_i,a_i; 1/2,1/2).
\end{multline}


\section{The $\bar\mu$--invariant}
Let $\Sigma$ be a plumbed integral homology sphere, and let $X$ be an 
oriented plumbed 4-manifold such that $\p X = \Sigma$. The integral Wu 
class $w \in H_2 (X;\Z)$ is the unique homology class which is 
characteristic and whose coordinates are either 0 or 1 in the natural 
basis in $H_2 (X;\Z)$ represented by embedded 2-spheres. According to 
Neumann \cite{N}, the integer
\[
\bar\mu (\Sigma) = \frac 1 8\;(\,\sign (X) - w\cdot w)
\]
is independent of the choices in its definition and reduces modulo 2 to 
the Rohlin invariant of $\Sigma$. It is referred to as the 
$\bar\mu$--invariant. 

Let us now restrict ourselves to the case of $Y = \Sigma(a_1,\ldots,a_n)$. 
Choose integers $b_1,\ldots,b_n$ so that 
\begin{equation}\label{E:one}
a_1\cdots a_n\;\sum_{i=1}^n\;\frac {b_i}{a_i}\; 
=\;\sum_{i=1}^n\;b_i\,a_1\cdots a_n/a_i\; = 1.
\end{equation}
Note that each $b_i$ is defined uniquely modulo $a_i$. Then we have the 
following formulas for the $\bar\mu$--invariant; see \cite[Corollary 2.3]{N} 
and \cite[Theorem 6.2]{NR}.

\medskip

\noindent\textbf{Odd case:}  if all $a_1,\ldots,a_n$ are odd then
\[
-\bar\mu (Y) = \frac 1 8 - \frac 1 8\; \sum_{i=1}^n\,(c(a_i,b_i) + \sign b_i).
\]

\medskip

\noindent
\textbf{Even case:}  if $a_1$ is even, choose $b_i$ so that all $a_i - b_i$ 
are all odd (by replacing, if necessary, $b_i$ by $b_i \pm a_i$ for each $i 
> 1$, and then adjusting $b_1$ accordingly). Then  
\[
-\bar\mu (Y) = \frac 1 8 - \frac 1 8\; \sum_{i=1}^n\,c(a_i - b_i,a_i).
\]

\medskip

Here, the integers $c(q,p)$ are defined for coprime integer pairs $(q,p)$
with $q$ odd as follows. First, assume that both $p$ and $q$ are positive.
Then

\[
c(q,p)\; =\; - \frac 1 p \sum_{\xi^p = -1} \frac {(\xi + 1)(\xi^q + 1)}
{(\xi - 1)(\xi^q - 1)}\; =\; 
\frac 1 p \sum_{\substack{k = 1 \\ k\;\text{odd}}}^{2p - 1} 
\cot\left(\frac{\pi k}{2p}\right)\cot\left(\frac{\pi qk}{2p}\right)
\]

\medskip
The integers $c(q,p)$ show up in the book~\cite[Theorem 1, pp.~102--103]{HZ} 
under the name $-t_p(1,q)$. We can use that theorem together with formula 
(6) on page 100 of \cite{HZ} to write
\begin{equation}\label{E:cpq}
c (q,p) = - 4s(q,p) + 8s(q,2p).
\end{equation}

Next,  the above definition of $c(q,p)$ is extended to both positive and 
negative $p$ and $q$ by the formula $c(q,p) = \sign(pq)\,c(|q|,|p|)$.  Using 
(\ref{E:cpq}), we can write the above formulas for the $\bar\mu$--invariant 
in the following form.

\medskip

\noindent\textbf{Odd case:}  if all $a_1,\ldots,a_n$ are odd then
\begin{multline}\label{E:mu-odd}
-\bar\mu (Y) = \frac 1 8 - \frac 1 8 \sum_{i=1}^n\, \sign b_i + \frac 1 2\,
\sum_{i=1}^n \sign b_i\cdot s(a_i,|b_i|) \\ 
- \sum_{i=1}^n\, \sign b_i\cdot s(a_i,2|b_i|).
\end{multline}

\medskip

\noindent
\textbf{Even case:} if $a_1$ is even and $b_i$ are chosen so that $a_i - b_i$ 
are all odd, then  
\begin{equation}\label{E:mu-even}
-\bar\mu (Y) = \frac 1 8 + \frac 1 2\,\sum_{i=1}^n\, s(a_i - b_i,a_i)
- \sum_{i=1}^n s(a_i - b_i,2a_i)
\end{equation}

\medskip\noindent
In the latter formula, we used a natural extension of the Dedekind sum 
$s(q,p)$ to the negative values of $q$ as an odd function in $q$; it is 
still given by the formula (\ref{E:ded}). We will continue to assume,
however, that $p$ in $s(q,p)$ is positive.


\section{The odd case}
In this section, we will show that the right hand sides of (\ref{E:eta-odd}) 
and (\ref{E:mu-odd}) are equal to each other, thus proving the formula 
(\ref{E:main}) in the case when all $a_1,\ldots,a_n$ are odd. 

\begin{lemma}\label{L:one}
For any integers $a > 0$ and $b$, $c$ such that $bc = 1\mmod a$ we have  
$s(c,a) = s(b,a)$.
\end{lemma}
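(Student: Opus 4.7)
The plan is to exploit the standard fact that the Dedekind sum $s(q,a)$ depends on $q$ only modulo $a$, together with the symmetry of the sum under the change of variables $\mu \mapsto c\mu \pmod a$. Since $bc \equiv 1 \pmod a$, the integer $c$ is a unit modulo $a$, so multiplication by $c$ permutes the residues modulo $a$.

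More precisely, I would start from the definition
\[
s(b,a) \;=\; \sum_{\mu \mmod a} \lt \frac{\mu}{a} \rt \lt \frac{b\mu}{a} \rt,
\]
and substitute $\mu \mapsto c\mu$. Since $c$ is coprime to $a$, the new index $c\mu$ still runs over a complete residue system modulo $a$. Because the symbol $((\,\cdot\,))$ depends only on its argument modulo $1$, and $bc\mu/a \equiv \mu/a \pmod 1$ by the hypothesis $bc \equiv 1 \pmod a$, the sum becomes
\[
s(b,a) \;=\; \sum_{\mu \mmod a} \lt \frac{c\mu}{a} \rt \lt \frac{bc\mu}{a} \rt
\;=\; \sum_{\mu \mmod a} \lt \frac{c\mu}{a} \rt \lt \frac{\mu}{a} \rt \;=\; s(c,a),
\]
using the symmetry of the two factors in the last step.

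I do not anticipate any genuine obstacle here; the lemma is a purely formal consequence of the definition of the Dedekind sum and the fact that $((\,\cdot\,))$ is a function of its argument modulo $1$. The only thing to be careful about is that $c$ is well-defined modulo $a$ (so $s(c,a)$ is unambiguous), which is built into the hypothesis $bc \equiv 1 \pmod a$.
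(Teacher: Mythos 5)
Your proof is correct and follows essentially the same route as the paper's: a change of variables by a unit modulo $a$ (you multiply the index by $c$ inside $s(b,a)$, while the paper multiplies by $b$ inside $s(c,a)$), using that $\(\cdot\)$ depends only on its argument mod $1$. No issues.
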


\begin{proof} Observe that $bc = 1\mmod a$ implies that $b$ and $a$ are
coprime hence
\begin{multline}\notag
s(c,a) = \sum_{\mu\mmod a} 
\lt\frac{\mu}{a}\rt \lt\frac{\mu c}{a}\rt 
= \sum_{\mu\mmod a} 
\lt\frac{\mu b}{a}\rt \lt\frac{\mu b c}{a}\rt \\
= \sum_{\mu\mmod a} 
\lt\frac{\mu b}{a}\rt \lt\frac{\mu}{a}\rt
=  s(b,a).
\end{multline}
\end{proof}

\begin{lemma}\label{L:two}
For any coprime positive integers $a$ and $b$ such that $a$ is odd, 
\[
\frac 1 2\,s(a,b) - s(a,2b)\; =\; - s(a,b;0,1/2) - \frac 1 2\,s(a,b).
\]
\end{lemma}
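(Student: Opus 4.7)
The plan is to prove the equivalent statement
\[
s(a,2b) \;=\; s(a,b) + s(a,b;0,1/2),
\]
from which the claimed identity follows by straightforward algebra.

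First I would note that since $a$ is odd and $\gcd(a,b) = 1$, we also have $\gcd(a,2b) = 1$, so $s(a,2b)$ is a bona fide Dedekind sum. The strategy is to split the summation index $\mu \mmod 2b$ in
\[
s(a,2b) = \sum_{\mu \mmod 2b} \lt \frac{\mu}{2b} \rt \lt \frac{a\mu}{2b} \rt
\]
according to the parity of $\mu$.

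For the even terms, I would write $\mu = 2k$ with $k \mmod b$ and observe that $\((2k)/(2b)\) = \(k/b\)$ and $\((2ak)/(2b)\) = \(ak/b\)$, since the arguments agree modulo $1$ and the integrality conditions coincide. This immediately identifies the even contribution as $s(a,b)$.

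For the odd terms, write $\mu = 2k+1$ with $k \mmod b$, so $\mu/(2b) = (k+1/2)/b$. The first factor becomes $\((k+1/2)/b\)$ with no further fuss, since $k+1/2$ is never integral. For the second factor $\(a(k+1/2)/b\) = \(a(2k+1)/(2b)\)$, the hypothesis that $a$ is odd is exactly what guarantees that $a(2k+1)$ is odd and hence $a(2k+1)/(2b)$ is never an integer, so both sides are computed by the same nontrivial branch of the sawtooth function. The odd contribution is therefore $\sum_{k \mmod b} \((k+1/2)/b\)\(a(k+1/2)/b\) = s(a,b;0,1/2)$.

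Combining the two pieces gives $s(a,2b) = s(a,b) + s(a,b;0,1/2)$, and substituting into $\frac{1}{2} s(a,b) - s(a,2b)$ yields the lemma. The only subtlety worth double-checking is the handling of the integrality cases in the sawtooth function $\(\cdot\)$ under the substitutions $\mu \mapsto 2k$ and $\mu \mapsto 2k+1$; this is where the oddness of $a$ enters, and it is the one step where a careless argument could fail.
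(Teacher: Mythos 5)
Your proposal is correct and follows essentially the same route as the paper: split the sum defining $s(a,2b)$ over $\mu \mmod 2b$ by parity, identify the even part with $s(a,b)$ and the odd part with $s(a,b;0,1/2)$, and rearrange. (The integrality caveat you flag is actually automatic here, since the substitutions only rewrite the same real numbers in each sawtooth argument.)
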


\begin{proof}
The proof goes by splitting the summation over $\mu\mmod 2b$ in $s(a,2b)$ 
into two summations, one over even $\mu = 2\nu$, and the other over odd 
$\mu = 2\nu + 1$. More precisely,
\begin{alignat*}{1}
s(a,2b) 
& = \sum_{\mu\mmod 2b} \lt\frac{\mu}{2b}\rt \lt\frac{a\mu}{2b}\rt \\
& = \sum_{\nu\mmod b} \lt\frac{\nu}{b}\rt \lt\frac{a\nu}{b}\rt + 
\sum_{\nu\mmod b} \lt\frac{2\nu+1}{2b}\rt \lt\frac{a(2\nu+1)}{2b}\rt \\
& = \sum_{\nu\mmod b} \lt\frac{\nu}{b}\rt \lt\frac{a\nu}{b}\rt + 
\sum_{\nu\mmod b} \lt\frac{\nu+1/2}{b}\rt \lt\frac{a(\nu+1/2)}{b}\rt \\
& = s(a,b) + s(a,b;0,1/2).
\end{alignat*}
The statement of the lemma now follows. 
\end{proof}

\medskip

Applying Lemma \ref{L:one} with $a = a_i$, $b = b_i$ and $c = a_1\cdots 
a_n/a_i$, and Lemma \ref{L:two} with $a = a_i$ and $b =
|b_i|$ respectively to the formulas (\ref{E:eta-odd}) and (\ref{E:mu-odd}),
we see that all we need to do is verify the following identity
\begin{multline}\label{E:int1}
- \frac 1 {8\,a_1\cdots a_n} + \frac 1 2\; \sum_{i=1}^n s(b_i,a_i) + 
\sum_{i=1}^n s(a_1\cdots a_n/a_i,a_i; 1/2,1/2) = \\
- \sum_{i=1}^n\;\sign b_i \cdot \left( \frac 1 8 + s(a_i,|b_i|;0,1/2)
+ \frac 1 2 s(a_i,|b_i|)\right).
\end{multline}

\medskip\noindent
Use the reciprocity laws (see for instance Appendix in \cite{Nic2}) to
obtain

\[
s(a_i,|b_i|;0,1/2) = -s (|b_i|,a_i; 1/2,0) + \frac {2b_i^2 - a_i^2 - 1}
{24\,a_i |b_i|}
\]
and 
\[
s(a_i,|b_i|) = - s(|b_i|,a_i) - \frac 1 4 + \frac {a_i^2 + b_i^2 + 1}
{12\,a_i |b_i|}.
\]

\medskip\noindent
Substituting the latter two formulas into (\ref{E:int1}) and keeping in mind 
that
\[
\sum_{i=1}^n\; \frac {b_i}{a_i}\; =\; \frac 1 {a_1\cdots a_n}
\]
because of (\ref{E:one}), we reduce verification of (\ref{E:int1}) to 
proving the following lemma (we write $\sign b_i\cdot s(|b_i|,a_i;1/2,0)
= s(b_i,a_i;1/2,0)$).

\begin{lemma}\quad
$s(b_i,a_i; 1/2,0) = s(a_1\cdots a_n/a_i,a_i; 1/2,1/2)$.
\end{lemma}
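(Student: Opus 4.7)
The plan is to transform both sides into a common sum indexed by odd residues modulo $2a_i$, exploiting the congruence $b_i c_i \equiv 1 \mmod a_i$ where $c_i := a_1\cdots a_n/a_i$ — immediate from \eqref{E:one} since every other term in that sum contains $a_i$ as a factor — together with the fact that $a_i$, $b_i$, and $c_i$ are all odd under the present hypothesis.

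First I would apply the substitution $\mu \mapsto c_i \mu$ in the left-hand side. This permutes residues mod $a_i$ and, combined with $b_i c_i \equiv 1 \mmod a_i$ and the $1$-periodicity of $\lt\cdot\rt$, rewrites the LHS as $\sum_{\nu \mmod a_i} \lt c_i\nu/a_i \rt \lt \nu/a_i + 1/2 \rt$; this is the same swap of $b_i$ for its inverse $c_i$ that powers Lemma~\ref{L:one}. I would then reindex both sides by odd integers modulo $2a_i$: set $m = 2\nu + a_i$ on the rewritten LHS and $m = 2\mu + 1$ on the RHS. In each case $m$ runs through the $a_i$ odd residues mod $2a_i$, and the two sides take the forms $\sum_m \lt c_i(m - a_i)/(2a_i) \rt \lt m/(2a_i) \rt$ and $\sum_m \lt m/(2a_i) \rt \lt (c_i m + a_i)/(2a_i) \rt$, respectively. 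To finish, I would match the remaining factors termwise using that oddness of $c_i$ gives $c_i a_i \equiv a_i \mmod 2a_i$ while trivially $-a_i \equiv a_i \mmod 2a_i$; together these yield $c_i(m - a_i) \equiv c_i m + a_i \mmod 2a_i$, and the $\lt\cdot\rt$ values agree by $1$-periodicity.

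The computation itself poses no serious obstacle. The only point requiring some care is recognizing that $2a_i$, rather than $a_i$, is the natural common denominator forced by the half-integer Rademacher shifts $(1/2,0)$ and $(1/2,1/2)$, and that the oddness of $c_i$ — inherited from the oddness of all the $a_j$'s — is precisely what is needed to make the two sums coincide.
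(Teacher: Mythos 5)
Your proof is correct and is essentially the paper's argument in disguise: the composite of your two reindexings (replacing $b_i$ by its inverse $c_i = a_1\cdots a_n/a_i$ and then shifting to odd residues mod $2a_i$) is exactly the single affine substitution $\nu = b_i\mu + (a_i-1)/2 \mmod a_i$ that the paper uses, and both hinge on the same two facts, namely $b_i c_i \equiv 1 \mmod{a_i}$ and the oddness of $c_i$.
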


\begin{proof}
One can easily see that the identity that needs to be verified, 
\begin{multline}\notag
\sum_{\mu\mmod a_i} \lt \frac {\mu}{a_i}\rt \lt \frac {b_i\mu}{a_i} + 
\frac 1 2 \rt = \\ 
\sum_{\nu\mmod a_i} \lt \frac {\nu + 1/2}{a_i}\rt 
\lt \frac {(\nu + 1/2)a_1\cdots a_n/a_i}{a_i} + \frac 1 2 \rt,
\end{multline}
follows by substitution $\nu = b_i\,\mu + (a_i - 1)/2 \mmod a_i$.
\end{proof}


\section{The even case}
In this section, we will prove the equality of the right hand sides of 
the formulas (\ref{E:eta-even}) and (\ref{E:mu-even}) and hence prove 
(\ref{E:main}) in the even case.

\begin{lemma}\label{L:three}
\quad $s(a_i - b_i,a_i) = - s(b_i,a_i)$.
\end{lemma}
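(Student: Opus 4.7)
The plan is to derive the identity directly from two elementary symmetries of $s(q,p)$ in its first argument, working straight from the definition \eqref{E:ded}. The role of this lemma in the even case is parallel to that of Lemma \ref{L:one} in the odd case: it rewrites the Dedekind sum $s(a_i - b_i, a_i)$ appearing in \eqref{E:mu-even} in a form that (after an auxiliary step analogous to Lemma \ref{L:two}) will match the terms $s(a_1\cdots a_n/a_i, a_i)$ on the right-hand side of \eqref{E:eta-even}.

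Concretely, I would first observe that $s(q,p)$ depends on $q$ only modulo $p$: replacing $q$ by $q+p$ shifts $q\mu/p$ by the integer $\mu$, and the sawtooth function is invariant under integer translation. Second, I would observe that $s(q,p)$ is odd in $q$, because the sawtooth function is itself odd (immediate from $\{-r\} = 1 - \{r\}$ for $r \notin \Z$), so replacing $q$ by $-q$ negates every term in the sum.

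Given these two facts, the lemma is immediate. Since $a_i - b_i \equiv -b_i \pmod{a_i}$, periodicity gives $s(a_i - b_i, a_i) = s(-b_i, a_i)$, and then oddness gives $s(-b_i, a_i) = -s(b_i, a_i)$. I do not anticipate any real obstacle; the only point requiring care is that $b_i$ may be negative, but the paper has already fixed the convention that $s$ is extended to negative $q$ as an odd function via the same formula \eqref{E:ded}, so the argument goes through uniformly regardless of the sign of $b_i$.
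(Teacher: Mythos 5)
Your proposal is correct and matches the paper's argument: the paper likewise deduces the identity from the periodicity and oddness of the sawtooth function $\( x \)$, simplifying $\lt \mu(a_i-b_i)/a_i \rt$ to $-\lt \mu b_i/a_i\rt$ term by term inside the sum \eqref{E:ded}. Your remark about the sign of $b_i$ is also consistent with the convention the paper fixes after \eqref{E:mu-even}.
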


\begin{proof}
Since $\( x \)$ is an odd function in $x$, we have 
\begin{multline}\notag
s(a_i - b_i,a_i) 
= \sum_{\mu\mmod a_i} \lt\frac {\mu}{a_i}\rt \lt \frac {\mu (a_i - b_i)}
{a_i} \rt \\
= \sum_{\mu\mmod a_i} \lt\frac {\mu}{a_i}\rt \lt \frac {-\mu b_i}{a_i}\rt
= -s(b_i,a_i).
\end{multline}
\end{proof}

Using Lemma \ref{L:one} and Lemma \ref{L:three}, we reduce our task to 
showing that, for every $i = 1,\ldots,n$, 
\begin{equation}\label{E:four}
s(a_1\cdots a_n/a_i,a_i;1/2,1/2) + s(a_i - b_i,2a_i) + s(b_i,a_i) = 0.
\end{equation}

\begin{lemma}
For any coprime integers $a > 0$ and $c > 0$, we have 
\[
s(c,a;1/2,1/2) + s(a - c,2a) + s(c,a) = 0.
\]
\end{lemma}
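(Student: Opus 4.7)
The plan is to mimic the parity-splitting argument of Lemma \ref{L:two}, rewriting $s(a-c,2a)$ as a sum over $\mu\mmod 2a$ and separating into even and odd residues $\mu=2\nu$ and $\mu=2\nu+1$ with $\nu\mmod a$. The two resulting pieces should be identified with $-s(c,a)$ and $-s(c,a;1/2,1/2)$ respectively, immediately yielding the claim.

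First I would handle the even part. For $\mu=2\nu$ the factor $1/(2a)$ simplifies, and using that $\(\cdot\)$ is an odd function together with $(a-c)\nu/a \equiv -c\nu/a \mmod 1$, exactly as in the proof of Lemma \ref{L:three}, the sum collapses to
\[
\sum_{\nu\mmod a}\lt\frac{\nu}{a}\rt\lt\frac{-c\nu}{a}\rt = -s(c,a).
\]

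The odd part is the step requiring care. For $\mu = 2\nu+1$, I would write
\[
\frac{(a-c)(2\nu+1)}{2a} = \frac{2\nu+1}{2} - \frac{c(2\nu+1)}{2a} = \nu + \frac12 - \frac{c(2\nu+1)}{2a},
\]
and observe that $\(\cdot\)$ is invariant under adding an integer, so the integer $\nu$ may be dropped. Applying oddness of $\(\cdot\)$ once more, the odd-residue contribution becomes
\[
-\sum_{\nu\mmod a}\lt\frac{2\nu+1}{2a}\rt\lt\frac{c(2\nu+1)}{2a}-\frac{1}{2}\rt.
\]
Now the key observation is that $\(x-1/2\) = \(x+1/2\)$ for all real $x$ (they differ by the integer $1$, and the two cases $x\pm 1/2\in\Z$ coincide). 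Substituting and rewriting $\((2\nu+1)/(2a)\) = \((\nu+1/2)/a\)$ exhibits this sum as $-s(c,a;1/2,1/2)$.

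Combining the two pieces gives $s(a-c,2a) = -s(c,a) - s(c,a;1/2,1/2)$, which is the identity claimed. The main obstacle, and really the only subtlety, is the manipulation in the odd-residue part: one has to keep track of signs introduced by oddness of $\(\cdot\)$ and justify the shift $\(x-1/2\) = \(x+1/2\)$, including the boundary behavior when the argument is an integer. Once these two points are checked, the rest is bookkeeping identical in spirit to Lemma \ref{L:two}.
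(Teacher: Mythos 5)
Your proof is correct and follows essentially the same route as the paper: split the sum defining $s(a-c,2a)$ over $\mu \bmod 2a$ into even and odd residues, identify the even part with $-s(c,a)$ via oddness of the sawtooth function, and the odd part with $-s(c,a;1/2,1/2)$ after shifting the argument by the integer $\nu$ and by $1$. The paper's displayed computation compresses the odd-residue manipulation into one line, but the intermediate steps you spell out (including the $1$-periodicity justifying the half-integer shift) are exactly what is implicitly used there.
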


\begin{proof}
Like in the proof of Lemma \ref{L:two}, we will break the summation over 
$\mu\mmod 2a$ in $s(a - c,2a)$ into two summations, one over $\mu = 2\nu$ 
and the other over $\mu = 2\nu + 1$. More precisely,
\begin{multline}\notag
s(a - c,2a) 
= \sum_{\mu\mmod 2a} \lt\frac{\mu}{2a}\rt \lt\frac{(a - c)\mu}{2a}\rt \\
= -\sum_{\nu\mmod a} \lt\frac{\nu}{a}\rt \lt\frac{c\nu}{a}\rt -
\sum_{\nu\mmod a} \lt\frac{2\nu+1}{2a}\rt \lt\frac{c(2\nu+1)}{2a} + 
\frac 1 2 \rt \\
= -s(c,a) - s(c,a;1/2,1/2).
\end{multline}
\end{proof}

We will apply the above lemma with $a = a_i$ and $c = a_1\cdots a_n/a_i$ to
obtain $s(a_1\cdots a_n/a_i,a_i;1/2,1/2) + s(a_i - a_1\cdots a_n/a_i,2a_i) +
s(a_1\cdots a_n/a_i,a_i) = 0$. Using Lemma \ref{L:one} to replace 
$s(a_1\cdots a_n/a_i,a_i)$ in the above formula by $s(b_i,a_i)$, we see that
the proof of (\ref{E:four}) will be complete after we prove the following 
formula.

\begin{lemma}
\quad $s(a_i - a_1\cdots a_n/a_i,2a_i) = s(a_i - b_i,2a_i)$. 
\end{lemma}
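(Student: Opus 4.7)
The plan is to deduce the identity from Lemma \ref{L:one} applied with modulus $2a_i$. Writing $d_i = a_1\cdots a_n/a_i$ for brevity, it suffices to verify the congruence
\[
(a_i - d_i)(a_i - b_i) \;\equiv\; 1 \mmod 2a_i,
\]
for Lemma \ref{L:one} then identifies the two Dedekind sums in question.

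To establish this congruence I would expand the product as $a_i^2 - a_i(b_i + d_i) + b_i d_i$ and evaluate each term modulo $2a_i$. The principal step is to pin down $b_i d_i$ modulo $2a_i$. Isolating the $i$-th summand in \eqref{E:one} and using that $a_i$ divides $d_j$ for every $j \neq i$, one writes $b_i d_i = 1 - a_i M_i$, where $M_i = \sum_{j \neq i} b_j \cdot a_1\cdots a_n/(a_i a_j) \in \Z$. The task thus reduces to determining the parity of $M_i$.

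The parity bookkeeping runs as follows. Since $a_1$ is even and the remaining $a_j$ are odd, the convention that each $a_j - b_j$ is odd forces $b_1$ odd and $b_j$ even for $j > 1$, while the factor $a_1\cdots a_n/(a_i a_j)$ is odd precisely when one of $i$, $j$ equals $1$. A short case check then gives $M_1$ even and $M_i$ odd for every $i > 1$, so
\[
b_1 d_1 \equiv 1 \mmod 2a_1, \qquad b_i d_i \equiv 1 + a_i \mmod 2a_i \quad \text{for } i > 1.
\]
Since $b_j + d_j$ is always even (both odd when $j = 1$, both even when $j > 1$), the middle term satisfies $a_i(b_i + d_i) \equiv 0 \mmod 2a_i$; meanwhile $a_1^2 \equiv 0 \mmod 2a_1$, whereas $a_i^2 \equiv a_i \mmod 2a_i$ for $i > 1$. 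Plugging these contributions into the expanded product yields the desired congruence in both cases. The only step that requires care is this parity count for $M_i$; everything else is routine arithmetic modulo $2a_i$.
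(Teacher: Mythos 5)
Your proof is correct and takes essentially the same route as the paper: both reduce the lemma to the congruence $(a_i - b_i)(a_i - a_1\cdots a_n/a_i) \equiv 1 \mmod 2a_i$ and then invoke Lemma \ref{L:one} with modulus $2a_i$. The only difference is cosmetic --- the paper verifies the congruence for $i \ge 2$ by checking it separately mod $a_i$ and mod $2$, whereas you expand the product uniformly and track parities; your parity bookkeeping checks out.
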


\begin{proof}
This is immediate from Lemma \ref{L:one} once we show 
that $(a_i - b_i)(a_i - a_1\cdots a_n/a_i) = 1\mmod 2a_i$. We will
consider two separate cases. If $i = 1$ then $a_1$ is even and $b_1$ is 
odd. Multiply out to obtain $(a_1 - b_1)(a_1 - a_2\cdots a_n) = a_1^2 +
b_1 a_2 \cdots a_n - a_1 (b_1 + a_2 \cdots a_n)$. Obviously, the first 
and the last summands are equal to zero modulo $2a_1$ because $a_1$ and
$(b_1 + a_2\cdots a_n)$ are even. Use the formula (\ref{E:one}) to write
$b_1 a_2 \cdots a_n = 1 - a_1 (b_2 a_2\cdots a_n + \ldots + b_n a_2
\cdots a_{n-1})$ and observe that the $b_2,\ldots, b_n$ are all even. 
This completes the proof in the case of $i = 1$.

Now suppose that $i \ge 2$. Since $a_i$ and 2 are coprime, it is enough 
to check separately that $(a_i - b_i)(a_i - a_1\cdots a_n/a_i)$ is 1 mod 
$a_i$ and 1 mod 2. The former is clear from (\ref{E:one}), and the latter 
follows from the observation that both $a_i - b_i$ and $a_i - a_1\cdots 
a_n/a_i$ are odd. 
\end{proof}


\section{Proof of Theorem \ref{T:dirac}}\label{S:dirac}
Endow $Y = \Sigma(a_1,\ldots,a_n)$ with a natural metric realizing the 
Thurston geometry on $Y$; see \cite{Scott}. Let $X$ be a plumbed manifold 
with boundary $\p X = Y$ and with metric that restricts to the metric on 
$Y$ and is a product near the boundary. If $X$ is spin, the 
Atiyah--Patodi--Singer index theorem~\cite{aps:I} asserts that
\begin{equation}\label{E:aps}
\frac 1 2\;\eta_{\Dir} (Y) + \frac 1 8\;\eta_{\Sign} (Y)\; =\; -\ind D^+ (X) 
- \frac 1 8\,\sign (X).
\end{equation}

\smallskip\noindent
Here, we used the fact that the Dirac operator on $Y$ has zero kernel; see 
Nicolaescu~\cite[Section 2.3]{Nic3}. On the other hand, it follows from 
the definition of the $\bar\mu$--invariant that $w = 0$ and hence
\[
\bar\mu (Y)\; =\; \frac 1 8\,\sign (X).
\]

\smallskip\noindent
The identity (\ref{E:main}) then implies that $\ind D^+ (X) = 0$. The 
special case of this when $Y$ is the Poincar\'e homology sphere 
$\Sigma(2,3,5)$ and $X$ is the negative definite $E_8$ manifold was proved 
by Kronheimer~\cite{Kron}.

If $X$ is not spin, for any choice of $\spin^c$ structure on $X$ with 
determinant bundle $L$ we have  
\[
\frac 1 2\;\eta_{\Dir} (Y) + \frac 1 8\;\eta_{\Sign} (Y)\; =\; 
-\ind D^+_L (X) - \frac 1 8\,\sign (X) + \frac 1 8\,c_1 (L)^2.
\]

\smallskip\noindent
(Compare with formula (1.37) in \cite{Nic1}). If the $\spin^c$ structure is 
such that $c_1 (L)$ is dual to $w \in H_2 (X;\Z)$ then
\[
\bar\mu (Y) = \frac 1 8\,(\sign (X) - w\cdot w) = \frac 1 8\,(\sign (X)
- c_1 (L)^2),
\]
and (\ref{E:main}) again implies that $\ind D^+_L (X) = 0$. This completes
the proof of Theorem \ref{T:dirac}.


\section{The invariant $\lambda_{\,\SW}$}\label{S:conj}
Let $X$ be a homology $S^1\times S^3$, by which we mean a closed oriented 
spin smooth 4-manifold with the integral homology of $S^1\times S^3$. For 
a generic pair $(g,\beta)$ consisting of a metric $g$ on $X$ and a 
perturbation $\beta \in \Omega^1(X,i\R)$, the Seiberg--Witten moduli 
space $\M(X,g,\beta)$ has finitely many irreducible points. It is 
oriented by a choice of homology orientation, that is, a generator $1 \in 
H^1 (X;\Z)$. Let $\#\,\M(X,g,\beta)$ denote the signed count of the points 
in this space. To counter the dependence of $\#\,\M(X,g,\beta)$ on the 
choice of $(g,\beta)$, we introduced in \cite{MRS} a correction term, 
$w(X,g,\beta)$, and proved that the quantity
\[
\lambda_{\,\SW} (X)\; =\; \#\,\M (X,g,\beta) - w (X,g,\beta)
\]

\smallskip\noindent
is an invariant of $X$ which reduces modulo 2 to its Rohlin invariant. 
The precise definition of the correction term is as follows. 

Let $Y \subset X$ be a smooth connected 3-manifold dual to the generator 
$1 \in H^1(X;\Z)$ and choose a smooth compact spin manifold $Z$ with 
boundary $Y$. Cutting $X$ open along $Y$ we obtain a cobordism $W$ from 
$Y$ to itself, which we use to construct the periodic-end manifold
\[
Z_+ = Z\,\cup\,W\,\cup\,W\ldots\cup\,W\,\cup\ldots
\]
The metric $g$ and perturbation $\beta$ extend to an end-periodic metric 
and, respectively, perturbation, on $Z_+$. This leads to the end-periodic 
perturbed Dirac operator $D^+ (Z_+) + \beta$, where $\beta$ acts via 
Clifford multiplication. We prove that $D^+(Z_+) + \beta$ is Fredholm 
in the usual Sobolev $L^2$-completion for generic $(g,\beta)$. The 
correction term is then defined as
\[
w (X,g,\beta)\; =\; \ind_{\C}\,(D^+ (Z_+) + \beta) + \frac 1 8\,\sign\,(Z).
\]

\smallskip
View $Y = \Sigma(a_1,\ldots,a_n)$ as a link of a complex surface singularity 
and let $X$ be the mapping torus of the involution on $Y$ induced by complex 
conjugation. The metric $g$ realizing the Thurston geometry on $Y$ is 
preserved by this involution and hence gives rise to a natural metric on $X$
called again $g$. We showed in \cite[Section 10]{MRS} that the pair $(g,0)$ 
is generic and that the space $\M(X,g,0)$ is empty. One can easily see that 
the manifold $Z_+$ has a product end and hence the correction term can be 
computed as in \eqref{E:aps} using the Atiyah--Patodi--Singer index theorem\,:
\[
w(X,g,0)\; =\; -\frac 1 2\;\eta_{\Dir} (Y) - \frac 1 8\;\eta_{\Sign} (Y).
\]

\smallskip\noindent
The conjecture \eqref{E:conj} now follows from Theorem \ref{T:main}. 


\bigskip

\end{document}